\newtheorem{theorem}{Theorem}[section]
\newtheorem{remark}[theorem]{Remark}
\newtheorem{lemma}[theorem]{Lemma}
\newenvironment{proof}[1][Proof]{\textbf{#1.} }{\ \rule{0.5em}{0.5em}}
\newcommand{\f}{\mathbb{F}}
\newcommand{\fq}{\mathbb{F}_q}
\newcommand{\fqc}{\mathbb{F}_{q^3}}
\begin{document}
\title{A modular interpretation of various cubic towers}
\author{Nurdag\"ul Anbar, Alp Bassa and Peter Beelen}
\date{\empty}

\maketitle

\begin{abstract} In this article we give a Drinfeld modular interpretation for various towers of function fields meeting Zink's bound.
\end{abstract}



\section{Introduction}

Let $p$ be a prime number and $q=p^n$ for a positive integer $n$. It is a central question in algebraic geometry how many $\mathbb F_{q}$-rational places $N(F)$ a function field $F$ with full constant field $\mathbb F_{q}$ and genus $g(F)$ can have. For small genus the Hasse--Weil estimate $N(F) \le q+1+2g(F) q^{1/2}$ is good, but the larger the genus compared to the size of the finite field $\mathbb F_{q}$ it gets worse. For this reason, Ihara introduced the constant $$A(q):=\limsup_{g(F) \to \infty} \frac{N(F)}{g(F)} \ ,$$ where the limit is taken over all function fields $F$ with full constant field $\mathbb F_{q}$ and genus tending to infinity.
It is known that $0< A(q) \le \sqrt{q}-1$, the first inequality being due to Serre \cite{serre}, while the second inequality is known as the Drinfeld--Vladut bound \cite{Vladut1983}. Combining the work of Ihara \cite{Ihara1982} and the Drinfeld--Vladut bound, one sees that $A(q)=\sqrt{q}-1$ if $q$ is square; i.e. $n$ is even.

Ihara used reductions of modular curves to obtain his result. It was a surprise when in \cite{GS_INV} completely different methods were used to prove the same result. In \cite{GS_INV} a lower bound for $A(q)$ is obtained using a \emph{tower (of function fields) over $\fq$}, $$\mathcal F=(F_0 \subseteq F_1 \subseteq F_2\subseteq \ldots \subseteq F_i\subseteq \ldots) \ .$$ It is required that all function fields $F_i$ have full constant field $\fq$, and $g(F_i) \to \infty$ as $i \to \infty$. Also all extensions $F_{i+1}/F_i$ are assumed to be separable. These assumptions imply that the following limit exists:
$$\lambda(\mathcal F):=\lim_{i\to \infty} \frac{N(F_i)}{g(F_i)} \ ,$$
which is called the limit of the tower $\mathcal F$.
One then obtains the lower bound for Ihara's constant: $A(q) \ge \lambda(\mathcal F).$
The main ingredient in \cite{GS_INV} was to explicitly produce a tower of function fields over $\mathbb{F}_{q}$ with limit $q^{1/2}$. This method also turned out to be fruitful for nonsquare $q$.
For $p=2$, using explicit towers of function fields, it was shown in \cite{vdGvdV2002} that $A(8) \ge 3/2$, while this result was generalized in \cite{BGS2005} to
\begin{equation}\label{eq:zink}
A(q^{3}) \ge \dfrac{2(q^{2}-1)}{q+2} \ .
\end{equation}
The generalization was achieved by explicitly constructing a tower of function fields over $\fqc$ (which we will call a \emph{cubic} tower) with limit $\lambda(\mathcal F) \ge 2(q^2-1)/(q+2).$ Since then several other papers have appeared in which other towers or alternative descriptions of previously known towers were formulated, giving rise to various cubic towers with the same limit \cite{I2007,CG2012,BaBeGS2013}.
Note that these results generalize the statement by Zink in \cite{zink} that $A(p^3) \ge 2(p^2-1)/(p+2)$ for a prime $p$. For this reason the bound in Equation \eqref{eq:zink} is called Zink's bound.

The different methods of using explicit towers on the one hand and reductions of modular curves on the other hand became interlinked when Elkies showed that the towers in \cite{GS_INV} can also be obtained using the theory of Drinfeld modules in \cite{elkies}. Despite the recent developments in \cite{BaBeGS2013} and \cite{Gek}, where the theory of Drinfeld modules and their moduli spaces was used to construct sequences of curves with many rational points over any non-prime field, it remained a mystery to what extent the original cubic towers meeting Zink's bound, can be explained from the modular theory. In this article we solve this problem.

\section{Modular setup and first equations}

We start by giving a brief introduction to Drinfeld modules, since these will be needed in the remainder of the paper. See \cite{Goss1996} for a more thorough and general overview.

\subsection{Drinfeld modules over $\fq[T]$}

Let $L$ be a field and $\overline{L}$ a fixed algebraic closure. Moreover, assume that $\iota: \fq[T] \rightarrow L$ is a $\fq$-algebra homomorphism. The kernel of $\iota$ is called the \emph{$\mathbb{F}_q[T]$-characteristic} of $L$. From now on, we will always assume that $L$ has $\mathbb{F}_q[T]$-characteristic $\langle T-1 \rangle \subset \fq[T]$ and by slight abuse of language also call the polynomial $T-1$ the $\mathbb{F}_q[T]$-characteristic of $L$. Note that this assumption implies that for any $P(T) \in \fq[T]$ we have $\iota(P(T))=P(1)$, the evaluation of the polynomial $P(T)$ at $1$. Now let $L\{\tau\}$ be the \emph{non-commutative polynomial ring} generated by the Frobenius endomorphism $\tau$ satisfying $\tau r = r^q \tau$ for all $r \in L$. Then an \emph{$\fq[T]$-Drinfeld module} over $L$ of \emph{rank} $3$ is a homomorphism
\begin{align*}
\varphi : \fq[T] &\to L\{\tau\}\\
P(T) &\mapsto \varphi_{P(T)}
\end{align*}
such that for all $P(T) \in \fq[T]\backslash\{0\}$, we have $\deg_\tau \varphi_{P(T)} = 3 \deg P(T)$, and the constant term of $\varphi_{P(T)}$ is equal to $\iota(P(T))$, i.e., equal to $P(1)$. This gives $\bar{L}$ the structure of an $\mathbb F_q[T]$-module. Since $\varphi$ is a homomorphism, it is already fully determined by $\varphi_T$. Therefore by slight above of language, we will talk about the Drinfeld module $\varphi_T$. Using that the rank of $\varphi$ is $3$, we see that $$\varphi_T=\Delta\tau^3+g\tau^2+h\tau+1 \ ,$$ for certain $\Delta,g,h \in L$ and $\Delta \neq 0$.

Two Drinfeld modules $\varphi$ and $\psi$ with the same $\mathbb{F}_q[T]$-characteristic are called \emph{isogenous} if there exists $\lambda \in L\{\tau\}$ different from zero such that
\begin{equation}
\lambda \circ \varphi_{P(T)}=\psi_{P(T)} \circ \lambda
\label{isogeny}
\end{equation}
for all $P(T)\in \mathbb F_q[T]$. The element $\lambda$ is called an \emph{isogeny} from $\varphi$ to $\psi$. Since we are considering $\fq[T]$-Drinfeld modules, it is sufficient to require that $\lambda \circ \varphi_{T}=\psi_{T}\circ \lambda$ for $\lambda$ to be an isogeny. It is easy to see that $\varphi_{P(T)}$ is an isogeny from $\varphi$ to itself for any $P(T)\in \fq[T]$. This isogeny is called the \emph{multiplication by $P(T)$ map}. The Drinfeld modules $\varphi$ and $\psi$ are called \emph{isomorphic (over $\overline{L})$} if $\lambda$ can be chosen from $\overline{L} \backslash\{0\}$.

An isogeny $\lambda \in L\{\tau\}$ corresponds to a linearized polynomial by identifying $\tau^i$ and $X^{q^i}$. This makes it possible to evaluate $\lambda$ at elements of $\overline{L}$. In particular we define the kernel of an isogeny $\lambda$ as follows:
$$\ker \lambda:=\{x\in \bar{L}|\lambda(x)=0\} \ .$$
From Equation~\eqref{isogeny} it follows that $\ker \lambda=\{x\in \bar{L}|\lambda(x)=0\}$ is an $\fq[T]$-submodule of $\bar{L}$ under the $\fq[T]$-action given by $\varphi$. For $P(T) \in \fq[T]$, we write $\varphi[P(T)]:=\ker \varphi_{P(T)}.$ This set is called the set of \emph{$P(T)$-torsion points} of the Drinfeld module $\varphi$. If $P(T)$ is coprime with the $\mathbb{F}_q[T]$-characteristic $T-1$, then $\varphi[P(T)] \cong(\fq[T]/\langle P(T) \rangle)^{3}$ as an $\fq[T]$-module, i.e., it is a free $\mathbb F_q[T]/\langle P(T) \rangle$-module of rank $3$.

An isogeny $\lambda$ is called a \emph{$P(T)$-isogeny} if $\ker \lambda$ is a free $\fq[T]/\langle P(T) \rangle$-submodule of $\varphi[P(T)]$. The \emph{rank of $\lambda$} is defined to be the rank of its kernel as a $\fq[T]/\langle P(T) \rangle$-module. Unlike in the classical case of elliptic curves, nontrivial isogenies can have rank $1$ or $2$.
For a separable $P(T)$-isogeny $\lambda$ one can find $\mu \in L\{\tau\}$ such that $\varphi_{P(T)}=\mu \circ \lambda$.

In case $P(T)=T-1$, we have $\varphi_{T-1}=\Delta\tau^3+g\tau^2+h\tau$ and therefore $\varphi[T-1]$ is isomorphic to a free $\fq[T]/\langle T-1 \rangle$-module of rank at most $2$. Classically, the Drinfeld module is called \emph{supersingular} (in $\mathbb{F}_q[T]$-characteristic $T-1$), if the rank of $\varphi[T-1]$ is zero, i.e., if the multiplication by $T-1$ map $\varphi_{T-1}$ is purely inseparable. This is the case if and only if $g=h=0$. We will call a Drinfeld module $\varphi$ \emph{weakly supersingular} (in $\mathbb{F}_q[T]$-characteristic $T-1$), if $(T-1)$-torsion points $\varphi[T-1]$ form a free module of rank at most one. In this case the multiplication by $T-1$ map has inseparability degree $\ge q^2$ and $h=0$. Comparing the inseparability degrees of $\phi_{T-1}$ and $\psi_{T-1}$ by Equation~\eqref{isogeny}, we see that the property of being (weakly) supersingular is preserved under isogenies and in particular under isomorphisms. From now on we will restrict ourselves to weakly supersingular Drinfeld modules and their isogenies.

For a weakly supersingular $\fq[T]$-Drinfeld module of rank $3$ given by $\varphi_T=\Delta\tau^3+g\tau^2+1$, we define the following $J$-invariant:
\begin{equation*}
J(\varphi):=\dfrac{g^{q^2+q+1}}{\Delta^{q+1}} \ .
\end{equation*}
Two Drinfeld modules $\varphi_T=\Delta_1\tau^3+g_1\tau^2+1$ and $\psi_T=\Delta_2\tau^3+g_2\tau^2+1$ are isomorphic if and only if $J(\varphi)=J(\psi).$ Indeed, if $c \varphi = \psi c$ for some nonzero constant $c \in \overline{L}$, then $c^{q^3-1}\Delta_2=\Delta_1$ and $c^{q^2-1}g_2=g_1$, implying that $J(\varphi)=J(\psi).$ Conversely, if $J(\varphi)=J(\psi)$, then $(\Delta_1/\Delta_2)^{q+1}=(g_1/g_2)^{q^2+q+1}$. We can find $c \in \overline{L}$ such that $c^{q^3-1}=\Delta_1/\Delta_2$ and from the previous we see that $c^{(q^2-1)(q^2+q+1)}=(g_1/g_2)^{q^2+q+1}.$ Therefore we can choose $\alpha$ satisfying $\alpha^{q^2+q+1}=1$ such that $c':=\alpha c$ satisfies both $(c')^{q^3-1}=\Delta_1/\Delta_2$ and $(c')^{q^2-1}=g_1/g_2$. The desired isomorphism between $\varphi$ and $\psi$ is then given by $c'$. Note that the supersingular Drinfeld modules of rank $3$ form one isomorphism class determined by $J(\varphi)=0$. In fact any supersingular Drinfeld module $\varphi$ and its isogenies can be defined over $\f_{q^3}$ (see \cite{G1991}).

\subsection{Normalized Drinfeld modules}
Since the expression for the $J$-invariant is somewhat cumbersome, rather than working with isomorphism classes directly, we will use normalized Drinfeld modules. An $\fq[T]$-Drinfeld module of rank $3$ is said to be normalized if $\Delta=-1$. This is a direct generalization of a similar notion used in \cite{elkies} for rank $2$ Drinfeld modules. Any isomorphism class of Drinfeld modules contains a normalized one, but two distinct normalized Drinfeld modules can be isomorphic. The $J$-invariant of the normalized weakly supersingular Drinfeld module $\varphi_T=-\tau^3+g_1\tau^2+1$ is given by:
\begin{equation}\label{eq:jinvariant}
J(\varphi)=g_1^{q^2+q+1} \ .
\end{equation}
Since we will be working with normalized Drinfeld modules or isomorphism classes thereof, we will take all isogenies to be monic.

Now let $\lambda:\varphi \to \psi$ be a separable monic $T$-isogeny of rank $1$. Then $\lambda=\tau-u_1$ with
$$\lambda \circ \varphi_T=\psi_T \circ \lambda$$
and there exists $\mu\in L\{\tau\}$ such that
$$\varphi_T=\mu\circ \lambda \ . $$
These imply that $\psi_T=\lambda \circ \mu$.
From
$$\mu \circ \lambda=\varphi_T=-\tau^3+g_1\tau^2 +1$$
it follows that
\begin{equation}\label{eq:mu and g}
\mu=-\tau^2-\frac{1}{u_1^{q+1}}\tau-\frac{1}{u_1}\ , \ {\rm with}\ g_1=\dfrac{u_1^{q^2+q+1}-1}{u_1^{q+1}}
\end{equation}
and
\begin{equation}\label{eq:h}\psi_T=-\tau^3+g_2\tau^2+1 \ ,  \ {\rm with}\ g_2=\dfrac{u_1^{q^2+q+1}-1}{u_1^{q^2+q}} \ . \end{equation}

The following lemma follows which will be useful later:
\begin{lemma}\label{lem:ghu}
With the relations as above, we have $$\fq(g_1,g_2)=\fq(u_1)\ \makebox{and} \ \fq(g_1^{q^2+q+1},g_2^{q^2+q+1})=\fq(u_1^{q^2+q+1}) \ . $$
\end{lemma}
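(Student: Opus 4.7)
My plan is to extract from \eqref{eq:mu and g} and \eqref{eq:h} two polynomial relations satisfied by $u_1$ over $K:=\mathbb{F}_q(g_1,g_2)$. Dividing $g_1$ by $g_2$ gives $u_1^{q^2-1}=g_1/g_2$, while eliminating $u_1^{q^2+q+1}$ between the two expressions yields $g_1 u_1^{q+1}(u_1-g_2)=g_2$. Thus $u_1$ is a common root of $X^{q^2-1}-g_1/g_2$ and $g_1X^{q+2}-g_1g_2X^{q+1}-g_2$ in $K[X]$.

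Since $K\subseteq\mathbb{F}_q(u_1)$ is automatic, the first equality reduces to showing $u_1\in K$, which I would prove by a Galois conjugation argument. Suppose $u_1\notin K$ and let $u_1'\neq u_1$ be a conjugate of $u_1$ over $K$. The first relation forces $\zeta:=u_1'/u_1$ to satisfy $\zeta^{q^2-1}=1$, hence $\zeta\in\mathbb{F}_{q^2}^*$. Substituting $u_1'=\zeta u_1$ into the second relation and subtracting the corresponding relation for $u_1$ gives
\[
(\zeta^{q+2}-1)\,u_1 = g_2(\zeta^{q+1}-1).
\]
Now $g_2\neq 0$, since otherwise $u_1^{q^2+q+1}=1$, contradicting transcendence of $u_1$ over $\mathbb{F}_q$. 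The subcase $\zeta^{q+2}=1$ then forces $\zeta^{q+1}=1$ and hence $\zeta=1$, absurd. In the remaining subcase $u_1/g_2=(\zeta^{q+1}-1)/(\zeta^{q+2}-1)\in\mathbb{F}_{q^2}$, contradicting the fact that $u_1/g_2=u_1^{q^2+q+1}/(u_1^{q^2+q+1}-1)$ is a nonconstant rational function of $u_1$ and therefore not algebraic over $\mathbb{F}_q$.

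For the second equality, set $w:=u_1^{q^2+q+1}$ and $M:=\mathbb{F}_q(g_1^{q^2+q+1},g_2^{q^2+q+1})$. Raising \eqref{eq:mu and g} and \eqref{eq:h} to the $(q^2+q+1)$-th power gives the analogous pair of relations
\[
(w-1)^{q^2+q+1} = g_1^{q^2+q+1}\,w^{q+1} \quad \text{and} \quad w^{q^2-1} = g_1^{q^2+q+1}/g_2^{q^2+q+1}\in M.
\]
An identical conjugation argument applies: a conjugate $w'=\xi w$ with $\xi\in\mathbb{F}_{q^2}^*$ must satisfy
\[
\left(\frac{\xi w-1}{w-1}\right)^{q^2+q+1} = \xi^{q+1},
\]
so if $\xi\neq 1$ the left-hand side is a nonconstant rational function of $w$ whose $(q^2+q+1)$-th power lies in $\mathbb{F}_{q^2}$, contradicting the algebraic closedness of $\mathbb{F}_{q^2}$ in $\mathbb{F}_{q^2}(w)$. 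Hence $w\in M$.

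The main obstacle is isolating the right pair of relations that forces conjugates of $u_1$ (respectively $w$) to collapse, together with the careful treatment of the degenerate subcase $\zeta^{q+2}=1$ using $g_2\neq 0$; once this is in place, the transcendence/constant-field argument is routine.
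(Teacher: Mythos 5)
Your proof is correct, but it takes a genuinely different route from the paper. The paper argues via ramification theory: since $g_1/g_2=u_1^{q^2-1}$, the extension $\fq(u_1)/\fq(g_1,g_2)$ is tame, whereas the place $u_1=\infty$ is wildly ramified of index $q^2$ over the pole of $g_1$ in $\fq(g_1)$; hence $[\fq(g_1,g_2):\fq(g_1)]\ge q^2$, and since this degree also divides $[\fq(u_1):\fq(g_1)]=q^2+q+1$, equality follows. You instead eliminate explicitly to produce the second, low-degree relation $g_1u_1^{q+1}(u_1-g_2)=g_2$ over $K=\fq(g_1,g_2)$ (which I checked: from $u_1^{q^2+q+1}(u_1-g_2)=u_1$ and $u_1^{q^2+q+1}=g_1u_1^{q+1}+1$ it does follow), and then rule out nontrivial conjugates by a roots-of-unity computation combined with the transcendence of $u_1$. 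Your approach is more elementary --- no places or ramification indices, and no need for the degree count $[\fq(u_1):\fq(g_1)]=q^2+q+1$ --- at the cost of having to find the right pair of relations; the paper's approach generalizes more mechanically to the $(q^2+q+1)$-st power version, which you handle by an analogous but separate computation. One small point you should make explicit: the existence of a conjugate $u_1'\neq u_1$ (resp.\ $w'\neq w$) requires separability of $u_1$ over $K$, which does hold because $u_1$ is a root of $X^{q^2-1}-g_1/g_2$ and $\gcd(q^2-1,p)=1$; as written you invoke the conjugate before justifying that it exists. With that line added, the argument is complete.
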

\begin{proof}
First observe that in the extension $\fq(u_1)/\fq(g_1/g_2)$ only tame ramification occurs, since $g_1/g_2=u_1^{q^2-1}$. In particular, the same holds for the extension $\fq(u_1)/\fq(g_1,g_2)$. On the other hand there exists a place of $\fq(u_1)$ lying above the pole of $g_1$ in $\fq(g_1)$ which has ramification index $q^2$. Since all ramification in $\fq(u_1)/\fq(g_1,g_2)$ is tame, we conclude that the extension degree $[\fq(g_1,g_2):\fq(g_1)]$ is at least $q^2$. However, $[\fq(g_1,g_2):\fq(g_1)]$ also divides $[\fq(u_1):\fq(g_1)]=q^2+q+1$, implying that $\fq(g_1,g_2)=\fq(u_1)$ as desired. The second part of the lemma can be shown similarly considering the $(q^2+q+1)$-st powers of all variables involved.
\end{proof}

By Equation \eqref{eq:jinvariant} the quantity $g_1^{q^2+q+1}$ (resp. $g_2^{q^2+q+1}$) is the $J$-invariant of the normalized supersingular Drinfeld module $\varphi_T=-\tau^3+g_1\tau^2+1$ (resp. $\psi_T=-\tau^3+g_2\tau^2+1$). The above lemma is useful, since it implies that if two such Drinfeld modules are isogenous by an isogeny $\lambda=\tau-u_1$, then $u_1^{q^2+q+1}$ can be used as a parameter to describe isomorphism classes of the data $\lambda: \varphi \to \psi$ with $\lambda=\tau-u_1$.

\section{Composition of $T$-isogenies}

Next we will study composites of $T$-isogenies between normalized Drinfeld modules of rank $3$. The various possibilities of their structure will later on be the main ingredient in our explanation of the cubic towers in \cite{BGS2005,I2007,CG2012} from a modular point of view.

\subsection{Composition of two $T$-isogenies}

Let $\varphi^{(1)}, \varphi^{(2)}, \varphi^{(3)}$ be Drinfeld modules with $\varphi_T^{(i)}=-\tau^3+g_i\tau^2+1$, and $\lambda_1: \varphi^{(1)}\to\varphi^{(2)}$ and $\lambda_2:\varphi^{(2)}\to\varphi^{(3)}$ be $T$-isogenies of rank $1$ with $\lambda_i=\tau-u_i$. Since $\lambda_1$ and $\lambda_2$ are separable $T$-isogenies, there exist $\mu_1$ and $\mu_2$ such that $\varphi_T^{(1)}=\mu_1\circ \lambda_1$ and $\varphi_T^{(2)}=\lambda_1\circ \mu_1=\mu_2\circ \lambda_2$. We wish to find an algebraic relation between $u_1$ and $u_2$. Combining Equation~\eqref{eq:mu and g} applied to $\varphi^{(2)}$ and $\lambda_2$ with Equation~\eqref{eq:h} applied to $\varphi^{(1)}$ and $\lambda_1$, we see that
\begin{equation}\label{eq:u1u2}\dfrac{u_2^{q^2+q+1}-1}{u_2^{q+1}}=g_2=\dfrac{u_1^{q^2+q+1}-1}{u_1^{q^2+q}}.\end{equation}
Clearing denominators we have
\begin{multline*}
0=(u_2^{q^2+q+1}-1)u_1^{q^2+q}-u_2^{q+1}(u_1^{q^2+q+1}-1)\\= \left(u_1^{q+1}u_2^{q+1}+u_2+u_1^q\right)\left(u_2\cdot \Bigl(u_1^{q+1}u_2^{q+1}+u_2+u_1^q \Bigr)^{q-1}-u_1^{q^2}\right) \ .
\end{multline*}
We will now recover these two factors in Equations~\eqref{eq:factor1} and \eqref{eq:factor2} using the modular theory, thus giving them a modular interpretation.
The composite $\lambda_{2} \circ \lambda_1$ will be an isogeny from $\varphi^{(1)}$ to $\varphi^{(3)}$, with
$$\ker \lambda_2 \circ \lambda_1 \subseteq \varphi^{(1)}[T^2] \ .$$
Since $\lambda_2\circ \lambda_1$ defines a separable map of degree $q^2$, under the $\phi^{(1)}$-action $\ker \lambda_2 \circ \lambda_1$ is an $\mathbb F_q[T]/\langle T^2\rangle$-module with $q^2$ elements. Hence it will be isomorphic to $\langle T\rangle/\langle T^2\rangle\bigoplus\langle T\rangle/\langle T^2\rangle$ or $\mathbb F_q[T]/\langle T^2\rangle$ as an $\mathbb F_q[T]/\langle T^2\rangle$-module.  In the first case $\ker \lambda_2 \circ \lambda_1$  is annihilated by $\varphi^{(1)}_T$, so $\lambda_2 \circ \lambda_1$ is a $T$-isogeny of $\varphi^{(1)}$ of rank $2$. In the second case $\ker \lambda_2 \circ \lambda_1$ is a free $\mathbb F_q[T]/\langle T^2\rangle$-submodule of $\varphi^{(1)}[T^2]$, so $\lambda_2 \circ \lambda_1$ is a $T^2$-isogeny of rank $1$. We will consider these two cases separately in detail.
\begin{itemize}
\item In the first case where $\lambda_2 \circ \lambda_1$ is a right factor of $\varphi^{(1)}_T=\mu_1 \circ \lambda_1$, we have that $\lambda_2$ is a right factor of $\mu_1$ already. 
    Let $x_2$ be a nonzero $T$-torsion point in the kernel of $\lambda_2$, i.e., $x_2^{q-1}=u_2$. Then, using Equation~\eqref{eq:mu and g}:
$$\mu_1(x_2)=-x_2^{q^2}-\frac{1}{u_1^{q+1}}x_2^q-\frac{1}{u_1}x_2=0 \ . $$
Dividing by $x_2$ we have
$$-(x_2^{q-1})^{q+1}-\frac{1}{u_1^{q+1}}x_2^{q-1}-\frac{1}{u_1}=-u_2^{q+1}-\frac{1}{u_1^{q+1}}u_2-\frac{1}{u_1}=0 \ . $$ After clearing denominators, we obtain \begin{equation}\label{eq:factor1}u_1^{q+1}u_2^{q+1}+u_2+u_1^q=0 \ . \end{equation}
\item In the second case where $\lambda_2$ is a right factor of $\varphi^{(2)}_T=\lambda_1\circ \mu_1$, but not a right factor of $\mu_1$, the kernel of $\lambda_2\circ \lambda_1$ is annihilated by $T^2$ but not by $T$. So $\lambda_2\circ \lambda_1$ is a $T^2$-isogeny of rank $1$. As before, let $x_2$ be a nonzero $T$-torsion point in the kernel of $\lambda_2$.
Then, since $\varphi^{(2)}_T=\lambda_1\circ \mu_1$, the quantity
$$\mu_1(x_2)=-x_2^{q^2}-\frac{1}{u_1^{q+1}}x_2^q-\frac{1}{u_1}x_2$$
is a nonzero element of the kernel of $\lambda_1=\tau-u_1$, i.e., a root of $T^{q-1}-u_1$. So we have
\begin{eqnarray*}
&&\Bigl(-x_2^{q^2}-\frac{1}{u_1^{q+1}}x_2^q-\frac{1}{u_1}x_2\Bigr)^{q-1}-u_1\\
&&=x_2^{q-1}\cdot \Bigl(-(x_2^{q-1})^{q+1}-\frac{1}{u_1^{q+1}}x_2^{q-1}-\frac{1}{u_1} \Bigr)^{q-1}-u_1\\
&&=u_2\cdot \Bigl(-u_2^{q+1}-\frac{1}{u_1^{q+1}}u_2-\frac{1}{u_1} \Bigr)^{q-1}-u_1=0 \ . \\
\end{eqnarray*}
After clearing denominators, we obtain
\begin{equation}\label{eq:factor2}u_2\cdot(u_1^{q+1}u_2^{q+1}+u_2+u_1^q)^{q-1}-u_1^{q^2}=0 \ .\end{equation}
\end{itemize}

This behaviour also occurs when working with isomorphism classes. From Lemma \ref{lem:ghu} we see that the quantities $z_1:=u_1^{q^2+q+1}$ and $z_2:=u_2^{q^2+q+1}$ can be used to describe isomorphism classes. By raising both sides in Equation~\eqref{eq:u1u2} to the $(q^2+q+1)$-st power, we see that
\begin{equation}\label{eq:z1z2}\dfrac{(z_2-1)^{q^2+q+1}}{z_2^{q+1}}=g_2^{q^2+q+1}=\dfrac{(z_1-1)^{q^2+q+1}}{z_1^{q^2+q}} \ , \end{equation}
or equivalently
\begin{align*}
z_2^{q+1}(z_1-1)^{q^2+q+1}-(z_2-1)^{q^2+q+1}z_1^{q^2+q}=0 \ .
\end{align*}
Note that
\begin{align*}
z_2^{q+1}(z_1-1)^{q^2+q+1}-(z_2-1)^{q^2+q+1}z_1^{q^2+q}= F_1\cdot F_2 \ ,
\end{align*}
where
\begin{equation}\label{eq:factor1z}F_1=z_2(z_1-1)^{q+1}+(z_1-1)z_1^q(z_2-1)^q+z_1^{q+1}(z_2-1)^{q+1}
\quad \text{and}
\end{equation}
\begin{equation}\label{eq:factor2z} F_2=(z_1-1)z_2\cdot \Bigl(z_2(z_1-1)^{q+1}+(z_1-1)z_1^q(z_2-1)^q+z_1^{q+1}(z_2-1)^{q+1} \Bigr)^{q-1}-z_1^{q^2}(z_2-1)^{q^2} \ .
\end{equation}
These are the analogues of the factors described in Equations \eqref{eq:factor1} and \eqref{eq:factor2}.

\subsection{Composition of three $T$-isogenies}

The factor of degree $q+1$ found in Equation~\eqref{eq:factor1} corresponded to the situation of two $T$-isogenies $\lambda_1: \varphi^{(1)} \to \varphi^{(2)}$ and $\lambda_2: \varphi^{(2)} \to \varphi^{(3)}$ whose composition $\lambda_2 \circ \lambda_1$ is a $T$-isogeny of rank $2$. Now we consider a third $T$-isogeny $\lambda_3: \varphi^{(3)} \to \varphi^{(4)}$ and assume that $\lambda_3 \circ \lambda_2$ is a $T$-isogeny of rank $2$ as well. Writing $\lambda_3=\tau-u_3$, we see that
$$u_1^{q+1}u_2^{q+1}+u_2+u_1^q=0 \ \quad  \makebox{and}  \quad \ u_2^{q+1}u_3^{q+1}+u_3+u_2^q=0 \ .$$ However,
\begin{equation*}
0=u_2^{q+1}u_3^{q+1}+u_3+u_2^q=\left(u_2u_3-\frac{1}{u_1}\right)\left(u_2u_3\left(u_2u_3-\frac1{u_1}\right)^{q-1}-u_1u_2^q\right) \ .
\end{equation*}
These factors can be explained and obtained using modular theory (see Equation~\eqref{eq:factordeg1} and \eqref{eq:factordegq}).
\begin{itemize}
\item If $\lambda_3\circ \lambda_2 \circ \lambda_1=(\tau-u_3)(\tau-u_2)(\tau-u_1)$ is a $T$-isogeny of rank $3$, then $\varphi_T^{(1)}=-(\tau-u_3)(\tau-u_2)(\tau-u_1)$, implying that $u_3u_2u_1=1$, or equivalently
\begin{equation}\label{eq:factordeg1}u_2u_3-\frac{1}{u_1}=0 \ . \end{equation}
\item Assume that $\lambda_3\circ \lambda_2 \circ \lambda_1$ is not a $T$-isogeny. Since $\lambda_2\circ \lambda_1: \varphi^{(1)} \to \varphi^{(3)}$ (resp. $\lambda_3\circ \lambda_2: \varphi^{(2)} \to \varphi^{(4)}$) is a $T$-isogeny, it is a right factor of $\varphi_T^{(1)}$ (resp. $\varphi_T^{(2)}$). This implies that
    \begin{equation}\label{eq:tisogrk2}
    \varphi_T^{(1)}=\left(-\tau+\frac{1}{u_1u_2}\right)(\tau-u_2)(\tau-u_1) \ \makebox{and} \ \varphi_T^{(2)}=\left(-\tau+\frac{1}{u_2u_3}\right)(\tau-u_3)(\tau-u_2) \ .
    \end{equation}
    Since furthermore $(\tau-u_1)\varphi_T^{(1)}=\varphi_T^{(2)}(\tau-u_1),$ we see after canceling common factors that \begin{equation}\label{eq:threesteps}
    (\tau-u_1)\left(-\tau+\frac{1}{u_1u_2}\right)=\left(-\tau+\frac{1}{u_2u_3}\right)(\tau-u_3)\ .\end{equation}
    Denote by $x_3$ a nonzero element in $\ker \lambda_3$, so that $x_3^{q-1}=u_3$. Then $x_3$ is in the kernel of the righthand side in Equation~\eqref{eq:threesteps}. However, $x_3$ is not in the kernel of $\left(-\tau+1/(u_1u_2)\right),$ since this would give $\lambda_3=\left(\tau-1/(u_1u_2)\right)$ and hence that $\lambda_3\circ \lambda_2 \circ \lambda_1 =-\varphi_T^{(1)}$ would be a $T$-isogeny. Therefore, using Equation~\eqref{eq:threesteps}, we see that $-x_3^q+x_3/(u_1u_2)$ is a nonzero element of $\ker \lambda_1$, which implies that $$0=\left(-x_3^q+\frac{x_3}{u_1u_2}\right)^{q-1}-u_1=x_3^{q-1}\left(-x_3^{q-1}+\frac{1}{u_1u_2}\right)^{q-1}-u_1=u_3\left(-u_3+\frac{1}{u_1u_2}\right)^{q-1}-u_1 \ ,$$ or alternatively
    \begin{equation}\label{eq:factordegq}u_2u_3\left(u_2u_3-\frac{1}{u_1}\right)^{q-1}-u_1u_2^q=0 \ .\end{equation}
\end{itemize}
When passing to isomorphism classes, similar phenomena occur. First of all Equation \eqref{eq:factor1} is replaced by Equation \eqref{eq:factor1z}. Given that
$$z_2(z_1-1)^{q+1}+(z_1-1)z_1^q(z_2-1)^q+z_1^{q+1}(z_2-1)^{q+1}=0 \ ,$$
one then verifies that
\begin{multline*}
0=z_3(z_2-1)^{q+1}+(z_2-1)z_2^q(z_3-1)^q+z_2^{q+1}(z_3-1)^{q+1}\\
=\left(z_2z_3-\frac{1}{z_1}\right)\left((z_2z_3-1)\left(z_2z_3-\frac{1}{z_1}\right)^{q-1}-\frac{(z_2-1)^q}{z_2}-\frac{(z_1-1)^q}{z_1^q}\right)\ .
\end{multline*}
The analogues of Equations \eqref{eq:factordeg1} and \eqref{eq:factordegq} when working with isomorphism classes are therefore given by
\begin{equation*}
z_2z_3-\frac{1}{z_1}=0
\end{equation*}
and
\begin{equation}\label{eq:factordegqz}(z_2z_3-1)\left(z_2z_3-\frac{1}{z_1}\right)^{q-1}-\frac{(z_2-1)^q}{z_2}-\frac{(z_1-1)^q}{z_1^q}=0\ .\end{equation}

\subsection{Composition of more than three isogenies}

We finish this section by giving some information on the composite of more than three $T$-isogenies.
As before let $\phi^{(i)}$ be Drinfeld modules given by $\phi_{T}^{(i)}=-\tau^{3}+g_{i}\tau+1$ and let $\lambda_i:\phi^{(i)}\mapsto\phi^{(i+1)}$ be $T$-isogenies of rank $1$ with $\lambda_i=\tau-u_i$ for nonzero element $u_i\in L$. In this subsection we always assume that the composite of two $T$-isogenies $\lambda_i\circ\lambda_{i+1}:\phi^{(i)}\mapsto\phi^{(i+2)} $ is a $T$-isogeny of rank $2$, but that the composite of three $T$-isogenies $\lambda_i\circ\lambda_{i+1}\circ \lambda_{i+2}:\phi^{(i)}\mapsto\phi^{(i+3)} $ is not a $T$-isogeny. These assumptions correspond to the case in the previous subsections leading to Equations \eqref{eq:factor1z} and \eqref{eq:factordegqz}.
Using this we show the following theorem.
\begin{theorem}
Assume that for any $i \ge 1$, the composite of two $T$-isogenies $\lambda_i\circ\lambda_{i+1}$ is a $T$-isogeny of rank $2$, but that the composite of three $T$-isogenies $\lambda_{i+2}\circ\lambda_{i+1}\circ \lambda_{i}$ is not a $T$-isogeny.
Then the composite of $2k$ many such $T$-isogenies $\lambda_{i+2k-1}\circ \cdots \circ \lambda_i$ is a $T^k$-isogeny of rank $2$.
\end{theorem}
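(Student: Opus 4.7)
The plan is to proceed by induction on $k$. The base case $k=1$ is the hypothesis. For the inductive step I assume the result for all $j\le k$ at every starting index, and factor
\[
\Lambda_{k+1} := \lambda_{i+2k+1}\circ\cdots\circ\lambda_i = \Lambda_k'\circ\Lambda_1,
\]
where $\Lambda_1 := \lambda_{i+1}\circ\lambda_i$ is a $T$-isogeny of rank $2$ and $\Lambda_k' := \lambda_{i+2k+1}\circ\cdots\circ\lambda_{i+2}$ is a $T^k$-isogeny of rank $2$ by the inductive hypothesis at starting index $i+2$. Set $K_{k+1} := \ker\Lambda_{k+1}$, of size $q^{2(k+1)}$. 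Decomposing the finite $\fq[T]$-module $K_{k+1}\cong\bigoplus_j \fq[T]/\langle T^{a_j}\rangle$ with $a_1\ge a_2\ge a_3\ge 0$ summing to $2(k+1)$, I aim to force $a_1=a_2=k+1$ and $a_3=0$ via

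\noindent (i) $K_{k+1}\subseteq\phi^{(i)}[T^{k+1}]$, forcing $a_j\le k+1$, and

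\noindent (ii) $K_{k+1}\cap\phi^{(i)}[T] = \ker\Lambda_1$, forcing exactly two $a_j$ to be positive.

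Combined with the sum, these pin down the $a_j$ uniquely. Condition (i) is a routine $T$-equivariance argument: for $x\in K_{k+1}$ one has $\Lambda_1(x)\in\ker\Lambda_k'\subseteq\phi^{(i+2)}[T^k]$ by the IH, so $T^k x\in\ker\Lambda_1\subseteq\phi^{(i)}[T]$ and $T^{k+1}x=0$.

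The main obstacle is (ii). An element $x\in\phi^{(i)}[T]$ lies in $K_{k+1}$ iff $\Lambda_1(x)$ lies in $\ker\Lambda_k'\cap\phi^{(i+2)}[T]$, the $T$-torsion of $\ker\Lambda_k'$. By the IH, $\ker\Lambda_k'$ is free of rank $2$ over $\fq[T]/\langle T^k\rangle$, so this $T$-torsion is $2$-dimensional over $\fq$; it contains the equally $2$-dimensional $\ker\Lambda_1^{(+2)}$ (for $\Lambda_1^{(+2)}:=\lambda_{i+3}\circ\lambda_{i+2}$, a $T$-isogeny of rank $2$ by the IH with $j=1$ at index $i+2$), so the two are equal. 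Since $\Lambda_1:\phi^{(i)}[T]\to\phi^{(i+2)}[T]$ has kernel $\ker\Lambda_1$ of $\fq$-dimension $2$, its image $\Lambda_1(\phi^{(i)}[T])$ is a $1$-dimensional subspace of the $3$-dimensional $\phi^{(i+2)}[T]$; hence strict containment in (ii) is equivalent to $\Lambda_1(\phi^{(i)}[T])\subseteq\ker\Lambda_1^{(+2)}$, equivalently to $\lambda_{i+3}\circ\lambda_{i+2}\circ\lambda_{i+1}\circ\lambda_i$ vanishing on $\phi^{(i)}[T]$.

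This last possibility I rule out using the hypotheses that the $3$-compositions $\lambda_{i+2}\circ\lambda_{i+1}\circ\lambda_i$ and $\lambda_{i+3}\circ\lambda_{i+2}\circ\lambda_{i+1}$ are not $T$-isogenies. Since the kernel of each such $3$-composition has size $q^3=|\phi^{(j)}[T]|$ at the appropriate starting index $j$, ``not a $T$-isogeny'' is equivalent to ``does not vanish on $\phi^{(j)}[T]$''. The first hypothesis (with $j=i$) gives $\lambda_{i+2}(\Lambda_1(\phi^{(i)}[T]))\ne 0$; together with the containment $\Lambda_1(\phi^{(i)}[T])\subseteq\lambda_{i+1}(\phi^{(i+1)}[T])$ and the fact that both $\lambda_{i+2}(\Lambda_1(\phi^{(i)}[T]))$ and $\lambda_{i+2}(\lambda_{i+1}(\phi^{(i+1)}[T]))$ are $1$-dimensional, the two must coincide. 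The second hypothesis (with $j=i+1$) then gives that $\lambda_{i+3}$ of this common $1$-dimensional subspace is nonzero, i.e.\ $\lambda_{i+3}\circ\lambda_{i+2}\circ\lambda_{i+1}\circ\lambda_i$ does not vanish on $\phi^{(i)}[T]$. This contradicts the excluded case, establishes (ii), and completes the induction.
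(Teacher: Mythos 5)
Your proof is correct, and while it shares the paper's overall skeleton (induction on $k$, showing $\mathbf{K}_{k+1}\subseteq\phi^{(i)}[T^{k+1}]$, computing the $T$-torsion of $\mathbf{K}_{k+1}$, and invoking the structure theorem for finite modules over a PID), the two substantive steps are carried out quite differently. For the containment $\mathbf{K}_{k+1}\subseteq\phi^{(i)}[T^{k+1}]$ the paper first proves, by a separate induction, the explicit factorization $\phi^{(i)}_{T^k}=\eta_i\circ\eta_{i+2}\circ\cdots\circ\eta_{i+2k-2}\circ\lambda_{i+2k-1}\circ\cdots\circ\lambda_i$ with $\eta_j=-(\tau-1/(u_{j+1}u_j))$, whereas you obtain it from $T$-equivariance of $\Lambda_1$ together with the inductive hypothesis on $\Lambda_k'$. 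More importantly, for the key fact that the $T$-torsion of $\mathbf{K}_{k+1}$ has exactly $q^2$ elements, the paper argues by contradiction: assuming $\phi^{(i)}[T]\subseteq\mathbf{K}_{k+1}$, it expresses $u_{i+j}$ as an explicit rational function of $u_i,\dots,u_{i+j-1}$ and derives a contradiction from a degree computation imported from \cite{abn2016} (the extension $\fqc(z_1,\dots,z_{i+j})/\fqc(z_1,\dots,z_{i+j-1})$ has degree $q$, against tameness of the extensions $\fqc(u_1,\ldots)/\fqc(z_1,\ldots)$). You instead use the inductive hypothesis to identify the $T$-torsion of $\ker\Lambda_k'$ with $\ker(\lambda_{i+3}\circ\lambda_{i+2})$, reducing the whole question to whether the single composite $\lambda_{i+3}\circ\lambda_{i+2}\circ\lambda_{i+1}\circ\lambda_i$ annihilates $\phi^{(i)}[T]$, and you rule that out by dimension counting on images of $T$-torsion spaces, using the two ``not a $T$-isogeny'' hypotheses at indices $i$ and $i+1$ (correctly translated, via the cardinality $q^3$ of the relevant kernels, into ``does not vanish on $\phi^{(j)}[T]$''). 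Your route is self-contained, avoids the appeal to \cite{abn2016}, and makes transparent exactly where each hypothesis enters; the paper's route, in exchange, produces the explicit factorization of $\phi^{(i)}_{T^k}$ through the $\eta_j$, which is of independent interest for the modular interpretation.
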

\begin{proof}
First note that similarly as in Equation \eqref{eq:tisogrk2}, the assumption that the composite of two $T$-isogenies $\lambda_i\circ\lambda_{i+1}$ is a $T$-isogeny of rank $2$, implies that:
\begin{align}\label{eq:etalambda}
\phi_{T}^{(i)}=\eta_i\circ\lambda_{i+1}\circ\lambda_i \ , \quad
\phi_{T}^{(i+1)}=\lambda_i\circ\eta_i\circ\lambda_{i+1} \quad \text{and} \quad
\phi_{T}^{(i+2)}=\lambda_{i+1}\circ\lambda_i\circ\eta_i  \ ,
\end{align}
where $\eta_i=-(\tau-1/(u_{i+1}u_i))$.
We firstly prove that for any integers $i,k\geq 1$ we have
\begin{align*}
\phi_{T^{k}}^{(i)}=\eta_i\circ \eta_{i+2}\circ\cdots \circ\eta_{i+2k-2}\circ\lambda_{i+2k-1}\circ\lambda_{i+2k-2}\circ\cdots \circ\lambda_{i+1}\circ\lambda_i \ .
\end{align*}
The proof is induction on $k$. By our assumption, the argument is trivially true for $k=1$. Now suppose that it is true for $k=n$. Then by using Equation \eqref{eq:etalambda} we obtain the following equalities, which concludes the desired argument.
$$\begin{array}{lcl}
\phi_{T^{n+1}}^{(i)} & = & \phi_{T^{n}}^{(i)} \circ \phi_{T}^{(i)} \\
 & = & \left(\eta_i\circ \eta_{i+2}\circ\cdots \circ\eta_{i+2n-2}\circ\lambda_{i+2n-1}\circ\lambda_{i+2n-2}\circ\cdots \circ\lambda_{i+1}\circ\lambda_i\right) \circ \left( \eta_i\circ\lambda_{i+1}\circ\lambda_i \right)\\
& = & \left(\eta_i\circ \eta_{i+2}\circ\cdots \circ\eta_{i+2n-2}\circ\lambda_{i+2n-1}\circ\lambda_{i+2n-2}\circ\cdots \circ\lambda_{i+1} \right)\circ\left( \lambda_i \circ  \eta_i \circ\lambda_{i+1} \right)\circ\lambda_i \\
& = &  \left(\eta_i\circ \eta_{i+2}\circ\cdots \circ\eta_{i+2n-2}\circ\lambda_{i+2n-1}\circ\lambda_{i+2n-2}\circ\cdots \circ\lambda_{i+1} \right)\circ \phi_T^{(i+1)} \circ\lambda_i\\
& = &  \left(\eta_i\circ \eta_{i+2}\circ\cdots \circ\eta_{i+2n-2}\circ\lambda_{i+2n-1}\circ\lambda_{i+2n-2}\circ\cdots \circ\lambda_{i+1} \right)\circ\left( \eta_{i+1} \circ \lambda_{i+2} \circ \lambda_{i+1}\right)\circ\lambda_i\\
 & \vdots & \\
 & = & \eta_i\circ \eta_{i+2}\circ\cdots \circ\eta_{i+2n}\circ\lambda_{i+2n+1}\circ\lambda_{i+2n}\circ\cdots \circ\lambda_{i+1}\circ\lambda_i \, .
\end{array}
$$
Now set $\mathbf{K}_{n}:=\mathrm{ker}(\lambda_{i+2n-1}\circ \cdots \circ \lambda_i)$ for $n\geq 1$. From above equalities we see that $\mathbf{K}_{n+1}$ is annihilated by $\phi_{T^{n+1}}^{(i)}$, and hence $\mathbf{K}_{n+1}$ is an $\mathbb{F}[T]/\langle T^{n+1} \rangle$ module of cardinality $q^{2n+2}$. We consider the map $m_T:\mathbf{K}_{n+1}\to \mathbf{K}_{n}$ defined by $m_T(a):=\phi_{T}^{(i)}(a)$ for $a\in \mathbf{K}_{n+1}$. Note that since $\mathbf{K}_{n+1}$ is annihilated by $\phi_{T^{n+1}}^{(i)}$, the map $m_T$ is well-defined homomorphism. It is clear that $\mathrm{ker}(\lambda_{i+1}\circ \lambda_i)$ lies in the set $\mathrm{ker}(m_T)$, and hence the cardinality of $\mathrm{ker}(m_T)$ is at least $q^2$. On the other hand, $\mathrm{ker}(m_T)$ contain at most $q^3$ elements since $\mathrm{ker}(m_T)$ lies in $\phi^{(i)}[T]$.

Now we prove that the cardinality of $\mathrm{ker}(m_T)$ is equal to $q^2$. Suppose our claim is not true. Then $\phi^{(i)}[T]\subseteq \mathbf{K}_{n+1}$ and this implies that
\begin{align*}
\lambda_{i+2n+1}\circ \cdots \circ \lambda_i=\psi_i\circ \phi_{T}^{(i)}=\psi_i\circ \eta_i\circ\lambda_{i+1}\circ\lambda_i \ ,
\end{align*}
for some $\psi_i\in L\lbrace\tau\rbrace$. In other words, $\lambda_{i+2n+1}\circ \cdots \circ \lambda_{i+2}=\psi_i\circ \eta_i$, where as before $\eta_i=-\left(\tau-1/(u_{i+1}u_i) \right)$. Let $1/(x_{i+1}x_i)$ be a nonzero torsion point of $\eta_i$. Since by assumption $\lambda_{i+2}\circ\lambda_{i+1}\circ \lambda_{i}$ is not a $T$-isogeny, there exists an integer $j$ with $2<j\leq 2n+1$ such that $1/(x_{i+1}x_i)$ is a nonzero torsion point of $\lambda_{i+j}\circ\cdots \circ \lambda_{i+2}$ but it is not a torsion point of $\lambda_{i+j-1}\circ\cdots \circ \lambda_{i+2}$. Hence the polynomial $f(T)=(T^q-u_{i+j-1}T)\circ \cdots \circ (T^q-u_{i+2}T)$ evaluated at $1/(x_{i+1}x_i)$ is a nonzero torsion point of $\lambda_{i+j}$.
This means that $\left( f\left(1/(x_{i+1}x_i) \right) \right)^{q-1}=u_{i+j}$. Note that
\begin{align*}
f(T)= T^{q^{j-2}}+a_{j-3} T^{q^{j-3}}+\cdots+ a_1T^q+a_0T \, ,
\end{align*}
where the $a_\ell$ are polynomials in $\mathbb{Z}[u_{i+j-1}, \ldots , u_{i+2}]$. As a result,
\begin{align}\label{uj}
& u_{i+j}=f\left(\frac{1}{x_{i+1}x_i} \right)^{q-1}\\
& =\left(\frac{1}{x_{i+1}x_i} \right)^{q-1}\left(\left(\frac{1}{x_{i+1}x_i} \right)^{q^{j-2}-1}+a_{j-3} \left(\frac{1}{x_{i+1}x_i} \right)^{q^{j-3}-1}+\cdots+ a_1\left(\frac{1}{x_{i+1}x_i} \right)^{q-1}+a_0 \right)^{q-1} \, . \nonumber
\end{align}
Since $1/(x_{i+1}x_i)$ is a nonzero torsion point of $\eta_i$, we have $\left( 1/(x_{i+1}x_i)\right)^{q-1}=1/(u_{i+1}u_i)$. By Equation \eqref{uj}, we can express $u_{i+j}$ as a rational function of $u_{i+j-1},\ldots, u_{i+2},u_{i+1},u_i$ with coefficients in the prime field. Then we have the following figure:
\begin{figure}[h]
\begin{small}
\begin{displaymath}
\xymatrix{
\mathbb{F}(u_1,\ldots,u_{i+j-1}) \ar@{-}[rrr]_{\mathrm{deg}=1} &  & & \mathbb{F}(u_1,\ldots,u_{i+j}) & \\
\mathbb{F}(z_1,\ldots,z_{i+j-1}) \ar@{-}@[red][u]_{\quad \quad \Longrightarrow \quad \quad \text{\textcolor{red}{tame extensions}}\quad \quad \Longleftarrow \quad \quad} \ar@{-}[rrr]_{\mathrm{deg}=q}&  & & \mathbb{F}(z_1,\ldots, z_{i+j}) \ar@{-}@[red][u]  &
}
\end{displaymath}
\end{small}
\label{fig:ZG}
\end{figure}
Since $z_i=u_i^{q^2+q+1}$ for all values of $i$, the extensions $\fqc(u_1,\dots,u_{i+j})/\fqc(z_1,\dots,z_{i+j})$ and $\fqc(u_1,\dots,u_{i+j-1})/\fqc(z_1,\dots,z_{i+j-1})$ are tame. Moreover, as shown in \cite[Sec.2.1]{abn2016}, the extension degree of $\fqc(z_1,\dots,z_{i+j})/\fqc(z_1,\dots,z_{i+j-1})$, equals $q$. Therefore we obtain a contradiction. Hence $\mathrm{ker}(m_T)$ has $q^2$ elements.

Combining the structure theorem for finitely generated modules over a principal ideal domain with the fact that $\mathrm{ker}(m_T)$ has $q^2$ elements, we may conclude that $\mathbf{K}_{n+1}$ is isomorphic to a direct sum of exactly two cyclic submodules, i.e.: $$\mathbf{K}_{n+1} \cong \langle T^{\ell_1} \rangle /\langle T^{n+1}\rangle \bigoplus \langle T^{\ell_2} \rangle /\langle T^{n+1}\rangle \, ,$$
for some integers $\ell_1, \ell_2\geq 0$.
Further, since the cardinality of $\mathbf{K}_{n+1}$ is $q^{2n+2}$, we obtain that $\ell_1=\ell_2= 0$. In other words, $\lambda_{i+2n+1}\circ \cdots \circ \lambda_i$ is a $T^{n+1}$-isogeny of rank $2$.
\end{proof}

\section{Two recursive cubic towers of function fields}

A sequence of function fields $\mathcal F = (F_1 \subseteq F_2 \subseteq \ldots )$ over $\fq$ is called recursive if there exists $f(X,Y)\in \mathbb{F}_q[X,Y]$ such that
\begin{itemize}
\item [(i)] there exists $x_1 \in F_1$ transcendental over $\fq$, such that $F_1=\fq(x_1)$, and
\item [(ii)] $F_{i+1}=F_i(x_{i+1})$ for some $x_{i+1}$ satisfying $f(x_i,x_{i+1})=0$, for all $i\geq 1$.
\end{itemize}
The polynomial $f(X,Y)$ may not determine the sequence $\mathcal F$ uniquely, since it may happen that the polynomial $f(x_i,T) \in F_i[T]$ is reducible for some $i$. In such cases, there may exist distinct sequences of function fields satisfying the same recursion. In other words, the polynomial $f(X,Y)$ may give rise to several recursive sequences of function fields as given in Remark \ref{towers}.

A recursive sequence of function fields $\mathcal F$ is called a recursive tower with constant field $\mathbb{F}_q$, if
\begin{itemize}
\item [(i)] for all $i \ge 1$, the finite field $\mathbb{F}_q$ is the full constant field of $F_i$, and
\item [(ii)] $\lim_{i \to \infty} g(F_{i})=\infty$.
\end{itemize}
Recursive towers of function fields have been used to achieve good lower bounds on Ihara's constant $A(q)$, see \cite{BaBeGS2013}. Cubic, recursive towers (whose field of definition we will denote by $\mathbb{F}_{q^3}$) can be obtained from the modular setting in a natural way. For $i \ge 1$, let $\varphi^{(i)}$ be normalized weakly supersingular Drinfeld modules given by $\varphi_T^{(i)}=-\tau^3+g_i\tau^2+1$ and let $\lambda^{(i)}: \varphi^{(i)} \to \varphi^{(i+1)}$ be $T$-isogenies of the form $\lambda^{(i)}=\tau-u_i$. In the previous section we have seen that the variables $g_i$ and $u_j$ are related to each other in an algebraic way. These relations give rise to several recursive towers of function fields. As Lemma \ref{lem:ghu} suggests, it is natural to pass to the variables $z_i:=u_i^{q^2+q+1}$. We then find two cubic, recursive sequences of function fields:

We denote by $\mathcal D=( D_1 \subset D_2 \subset\cdots)$ a recursive sequence of function fields satisfying $D_1=\mathbb{F}_{q^3}(z_1)$ and $D_{i+1}=D_i(z_{i+1})$, with $$z_2(z_1-1)^{q+1}+(z_1-1)z_1^q(z_2-1)^q+z_1^{q+1}(z_2-1)^{q+1}=0$$ and  $$(z_iz_{i+1}-1)\left(z_iz_{i+1}-\frac{1}{z_{i-1}}\right)^{q-1}-\frac{(z_i-1)^q}{z_i}-\frac{(z_{i-1}-1)^q}{z_{i-1}^q}=0 \ \makebox{for $i \ge 2$}\ .$$

Further we denote by $\mathcal E=(E_1 \subset E_2 \subset \cdots)$ a recursive sequence of function fields satisfying $E_1=\mathbb{F}_{q^3}(z_1)$ and $E_{i+1}=E_i(z_{i+1})$, with $$(z_i-1)z_{i+1}\cdot \Bigl(z_{i+1}(z_i-1)^{q+1}+(z_i-1)z_i^q(z_{i+1}-1)^q+z_i^{q+1}(z_{i+1}-1)^{q+1} \Bigr)^{q-1}-z_i^{q^2}(z_{i+1}-1)^{q^2}=0$$ for $i \ge 1$.

\begin{remark}\label{towers}
By definition, the sequence $\mathcal E$ satisfies the recursion
$$(X-1)Y\cdot \Bigl(Y(X-1)^{q+1}+(X-1)X^q(Y-1)^q+X^{q+1}(Y-1)^{q+1} \Bigr)^{q-1}-X^{q^2}(Y-1)^{q^2}=0 \ .$$
Comparing the definition of sequence $\mathcal D$ with Equations \eqref{eq:factor1z} and \eqref{eq:factordegqz}, we see that sequence $\mathcal D$ satisfies the recursion
$$Y(X-1)^{q+1}+(X-1)X^q(z_2-1)^q+X^{q+1}(Y-1)^{q+1}=0.$$
Since Equations \eqref{eq:factor1z} and \eqref{eq:factor2z} arise as factors of the same function given in Equation \eqref{eq:z1z2}, we conclude that both sequences $\mathcal D$ and $\mathcal E$ satisfy the same recursion, namely
$$Y^{q+1}(X-1)^{q^2+q+1}-(Y-1)^{q^2+q+1}X^{q^2+q}=0\ .$$
\end{remark}

\begin{remark}\label{rem:towerD}
In \cite{abn2016} (see the proof of Lemma 2.6 there) it is shown that the sequence $\mathcal D$ is in fact a tower with full constant field $\mathbb{F}_{q^3}$. More precisely, it is shown there that for the tower $\mathcal D$ it holds that $[D_2:D_1]=q+1$ and $[D_{i+1}:D_i]=q$ if $i>1$.
Moreover, it is shown that
\begin{align*}
z_1=(\alpha_1+1)/\alpha_1^{q+1} \qquad \text{and} \qquad z_2=\alpha_1^{q+1}+\alpha_1 \ ,
\end{align*}
where $\alpha_1:=(z_1z_2-1)/(z_1+1)$. This shows in particular that the second function field $D_2$ is rational. If for $i \ge 1$ we define $C_i:=D_{i+1}$ and $\alpha_i:=(z_iz_{i+1}-1)/(z_i+1)$, we obtain a tower $\mathcal C=( C_1 \subset C_2 \subset\cdots)$ with variables $\alpha_1,\alpha_2,\dots$ satisfying the recursion
\begin{align*}
 \frac{\alpha_{i+1}+1}{\alpha_{i+1}^{q+1}}=\alpha_i^{q+1}+\alpha_i\, \ .
\end{align*}
\end{remark}

\begin{remark}
Tower $\mathcal E$ is in fact (up to a change of variables) the same as the particular case $n=3$, $j=2$ and $k=1$ of the tower $\mathcal H$ studied in \cite{BaBeGS2013}. It turns out that $\mathcal E$ can be seen as a subtower of $\mathcal D$, see \cite{abn2016} for details. This relation can be depicted as in Figure \ref{fig:CDE}.
\begin{tiny}
\begin{figure}[h]
\begin{displaymath}
\xymatrix{
D_1 \ar@{-}@[red][r]^{\mathrm{deg}=q+1}& D_2 \ar@{-}@[red][r] & D_3 \ar@{-}@[red][r] & D_4\ar@{.}@[red][rr] &&   D_{2i} \ar@{-}@[red][r]&D_{2i+1} \ar@{-}@[red][r] & D_{2i+2} \ar@{.}@[red][r]&\\
& C_1 \ar@{-}[u]^{=}  \ar@{-}@[red][r]^{\mathrm{deg}=q}& C_2  \ar@{-}[u]^{=} \ar@{-}@[red][r]^{\mathrm{deg}=q}& C_3 \ar@{.}@[red][rr] \ar@{-}[u]^{=} & & C_{2i-1}\ar@{-}[u]^{=} \ar@{-}@[red][r]^{\mathrm{deg}=q} & C_{2i}\ar@{-}[u]^{=} \ar@{-}@[red][r] ^{\mathrm{deg}=q}& C_{2i+1}\ar@{-}[u]^{=} \ar@{.}@[red][r] &\\
&&&&&&&
\\
& E_1 \ar@{-}@[red][uu]^{\mathrm{deg}=q+1} \ar@{-}@[red][rr]^{\mathrm{deg}=q^2} & & E_2 \ar@{-}@[red][uu]^{\mathrm{deg}=q+1} \ar@{.}@[red][rr]& &   E_i \ar@{-}@[red][uu]^{\mathrm{deg}=q+1} \ar@{-}@[red][rr]^{\mathrm{deg}=q^2} & &E_{i+1} \ar@{-}@[red][uu]^{\mathrm{deg}=q+1} \ar@{.}@[red][r]&
}
\end{displaymath}
\caption{Relationship between towers $\mathcal C$, $\mathcal D$ and $\mathcal E$}
\label{fig:CDE}
\end{figure}
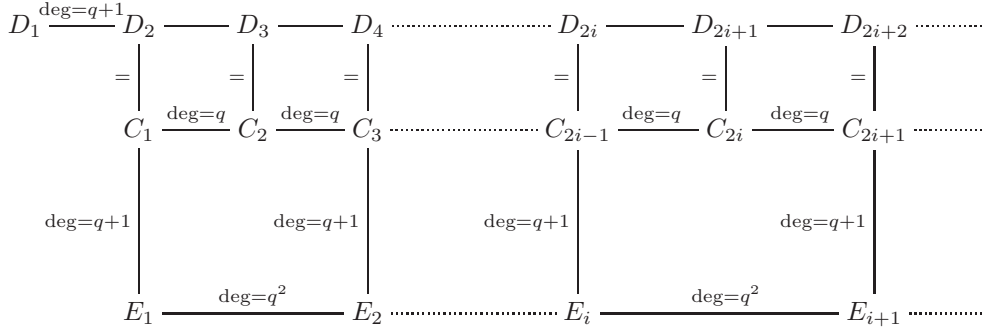
\end{tiny}
\end{remark}

\section{Relation to previously known cubic towers}

In this section we give an overview of several previously studied, cubic, recursive towers for which no modular interpretation was known. These towers all have limit at least $2(q^2-1)/(q+2)$, which is Zink's bound for $A(q^3)$. We will also see that they essentially all are equal to tower $\mathcal D$. Since the defining equations of tower $\mathcal D$ are explained by the theory of Drinfeld modules, we will then have reached our goal.

The first explicit tower known to achieve Zink's bound was given in \cite{vdGvdV2002}. This tower is defined recursively over the field $\f_8$ by the equation $$Y^2+Y=\dfrac{1}{X}+1+X.$$
In \cite{BGS2005} this tower was generalized for any $q$. The tower found in \cite{BGS2005} is recursively defined by the equation
$$\dfrac{Y^q}{1-Y}=\dfrac{X}{X^q+X-1}\ ,$$
or equivalently, after the change of variables $x=1/X$ and $y=1/Y$, by
\begin{equation}\label{eq:BGS}
y^q-y^{q-1}=\dfrac{1}{x^{q-1}}+1-x \ .
\end{equation}
We denote the tower recursively defined by Equation \eqref{eq:BGS} by ${\mathcal A=(A_1\subset A_2 \subset \cdots)}$ and the variables by $x_1,x_2,\dots$. Given in this form, it is clear that for $q=2$, the tower $\mathcal A$ reduces to the tower in \cite{vdGvdV2002}.

In \cite{I2007,CG2012} it was shown that the tower in \cite{BGS2005} also can be defined by the reducible equation
\begin{equation}\label{eq:iharared}
v^{q+1}+v=\dfrac{u+1}{u^{q+1}} \ .
\end{equation}
More precisely, the observations in \cite{I2007} imply that the quantities $U=x^q-x^{q-1}$ and $V=1/x^{q-1}+1-x$ satisfy the equation $$\dfrac{-V^q}{(1-V)^{q+1}}=\dfrac{U-1}{U^{q+1}}\ .$$ After the substitution of variables $u=-1+1/U$ and $v=-1+1/V$, Equation \eqref{eq:iharared} follows. Note that the polynomial $v^{q+1}+v-(u+1)/u^{q+1} \in \fqc(u)[v]$ is the product of $v+(u+1)/u$ and an irreducible factor of degree $q$ given by the left-hand-side of the equation:
\begin{equation}\label{eq:ihara}1+\sum_{i=0}^{q}v^i\left(-\frac{u+1}{u}\right)^{q-i}=0\ .\end{equation}
This equation is used to define a tower $\mathcal B = (B_1 \subset B_2 \subset \cdots)$ with constant field $\fqc$. Note that since $u$ and $v$ generate the rational function field $\fqc(x)$ (since $1/x=u(v+1)$), the towers $\mathcal A$ and $\mathcal B$ recursively defined by Equations \eqref{eq:BGS} and \eqref{eq:ihara} are essentially the same. To be precise, for $i \ge 1$ we have $A_i=B_{i+1}$ or in other words: if one deletes the first function field of the tower defined by Equation \eqref{eq:ihara}, one obtains the tower defined by \eqref{eq:BGS}. The towers therefore have the same limit. This limit was computed in \cite{BeGS2005} (using results from \cite{Be2004} and \cite{BGS2005}) to be $2(q^2-1)/(q+2)$.

Given a recursive tower $\mathcal F$ satisfying the recursion $F(x,y) = 0$, we define the dual tower of $\mathcal F$ to be the recursive tower satisfying the recursion $F(y,x)=0.$ Essentially, the order of the variables is interchanged. In particular, this means that reversing the order of the variables, gives an isomorphism between the $i$-th function fields of tower and its dual. Comparing Remark \ref{rem:towerD} and Equation \eqref{eq:iharared}, we conclude that the towers $\mathcal C$ and $\mathcal B$ are duals of each other. In particular that $C_i \cong B_i$ for all $i \ge 1$. In particular, the towers $\mathcal A$ and $\mathcal B$ can be obtained and explained using the theory of normalized Drinfeld modules of rank $3$.

\bigskip
\noindent
Concluding, an overview of the relation between towers $\mathcal A, \mathcal B, \mathcal C$ and  $\mathcal D$ is as in the following figure:
\begin{tiny}
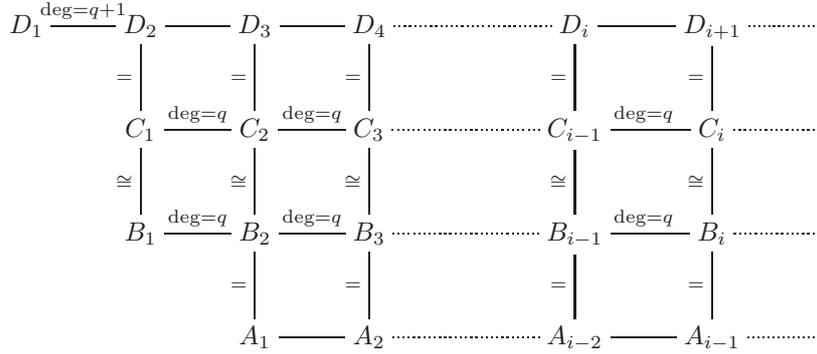
\begin{figure}[h]
\begin{displaymath}
\xymatrix{
D_1 \ar@{-}@[red][r]^{\mathrm{deg}=q+1}& D_2 \ar@{-}@[red][r] & D_3 \ar@{-}@[red][r] & D_4\ar@{.}@[red][rr] &&   D_{i} \ar@{-}@[red][r]&D_{i+1} \ar@{.}@[red][r] & \\
& C_1 \ar@{-}[u]^{=} \ar@{-}@[red][r]^{\mathrm{deg}=q}& C_2  \ar@{-}[u]^{=} \ar@{-}@[red][r]^{\mathrm{deg}=q}& C_3 \ar@{.}@[red][rr] \ar@{-}[u]^{=}& & C_{i-1}\ar@{-}[u]^{=}\ar@{-}@[red][r]^{\mathrm{deg}=q} & C_{i}\ar@{-}[u]^{=}\ar@{.}@[red][r] & \\
& B_1 \ar@{-}[u]^{\cong} \ar@{-}@[red][r]^{\mathrm{deg}=q} & B_2 \ar@{-}[u]^{\cong} \ar@{-}@[red][r]^{\mathrm{deg}=q}& B_3 \ar@{-}[u]^{\cong} \ar@{.}@[red][rr] & &B_{i-1} \ar@{-}[u]^{\cong} \ar@{-}@[red][r]^{\mathrm{deg}=q} & B_i \ar@{-}[u]^{\cong} \ar@{.}@[red][r] & \\
& & A_1 \ar@{-}[u]^{=} \ar@{-}@[red][r] & A_2\ar@{-}[u]^{=} \ar@{.}@[red][rr] & & A_{i-2} \ar@{-}[u]^{=} \ar@{-}@[red][r] & A_{i-1} \ar@{-}[u]^{=} \ar@{.}@[red][r] &
}
\end{displaymath}
\caption{Relationship between towers $\mathcal A$, $\mathcal B$, $\mathcal C$ and $\mathcal D$.}
\label{fig:ABCD}
\end{figure}
\end{tiny}

\section{Acknowledgements}

The authors would like to gratefully acknowledge the following foundations for received support. Nurdag\"ul Anbar is supported by H.C.~\O rsted COFUND Post-doc for the project ``Algebraic curves with many rational points" and by The Danish Council for Independent Research (Grant No.~DFF--4002-00367). Alp Bassa is supported by T\"{u}bitak Proj.~No.~112T233 and by the BAGEP Award of the Science Academy with funding supplied by Mehve{\c s} Demiren in memory of Selim Demiren. Peter Beelen is supported by The Danish Council for Independent Research (Grant No.~DFF--4002-00367).

\bibliographystyle{99}

\noindent
Nurdag\"ul Anbar\\
Technical University of Denmark,
Department of Applied Mathematics and Computer Science,
Matematiktorvet 303B, 2800 Kgs. Lyngby,
Denmark,
nurdagulanbar2@gmail.com

\vspace{1ex}
\noindent
Alp Bassa\\
Bo\u{g}azi\c{c}i University,
Faculty of Arts and Sciences,
Department of Mathematics,
34342 Bebek, \.{I}stanbul,
Turkey,
alp.bassa@boun.edu.tr

\vspace{1ex}
\noindent
Peter Beelen\\
Technical University of Denmark,
Department of Applied Mathematics and Computer Science,
Matematiktorvet 303B, 2800 Kgs. Lyngby,
Denmark,
pabe@dtu.dk

\end{document}